\newtheorem{theorem}{Theorem}[section]
\newtheorem{corollary}[theorem]{Corollary}
\newtheorem{proposition}[theorem]{Proposition}
\theoremstyle{definition}
\newcommand{\vertiii}[1]{{\left\vert\kern-0.25ex\left\vert\kern-0.25ex\left\vert #1
    \right\vert\kern-0.25ex\right\vert\kern-0.25ex\right\vert}}
\def\11{\textbf{$1$}}
\begin{document}

\numberwithin{equation}{section}

\title[The $\lambda$-function in the space of trace class operators]{The $\lambda$-function in the space of trace class operators}

\author[A.M. Peralta]{Antonio M. Peralta}
\address{Departamento de An{\'a}lisis Matem{\'a}tico, Universidad de Granada,\\
Facultad de Ciencias 18071, Granada, Spain}
\email{aperalta@ugr.es}

\begin{abstract} Let $C_1(H)$ denote the space of all trace class operators on an arbitrary complex Hilbert space $H$. We prove that $C_1(H)$ satisfies the $\lambda$-property, and we determine the form of the $\lambda$-function of Aron and Lohman on the closed unit ball of $C_1(H)$ by showing that $$\lambda (a) = \frac{1 - \|a\|_1 + 2 \|a\|_{\infty}}{2},$$ for every $a$ in ${C_1(H)}$ with $\|a\|_1 \leq 1$. This is a non-commutative extension of the formula established by Aron and Lohman for $\ell_1$.
\end{abstract}

\date{}
\maketitle

\section{Introduction}\label{sec:intro}

The \emph{$\lambda$-function} was originally introduced by R.M. Aron and R.H. Lohman in \cite{AronLohm}. This function, defined on the unit ball, $\mathcal{B}_{X}$, of a Banach space $X$, is determined by the set, $\partial_e (\mathcal{B}_{X}),$ of all extreme points of $\mathcal{B}_{X}$. For this reason, we restrict ourself to a Banach space $X$ with $\partial_e (\mathcal{B}_{X})\neq \emptyset$. Accordingly to the original definition, a triplet $(e, y,t)\in \partial_e (\mathcal{B}_{X})\times \mathcal{B}_{X} \times [0,1]$ is said to be \emph{amenable} to an element $x$ in $\mathcal{B}_{X}$ if $x= t e + (1-t) y$. The $\lambda$-function is the mapping $\lambda : \mathcal{B}_{X} \to \mathbb{R}_0^{+}$ given by $$\lambda (x) := \sup \Big\{t : (e, y, t) \in \partial_e (\mathcal{B}_{X})\times \mathcal{B}_{X} \times [0,1] \hbox{ is a triplet amenable to } x\Big\}.$$ The space $X$ satisfies the \emph{$\lambda$-property} if $\lambda (x) >0$, for every $x\in \mathcal{B}_{X}$. If $X$ satisfies the $\lambda$-property and $\inf\{\lambda(x): x \in \mathcal{B}_{X} \} > 0$ we say that $X$ has \emph{the uniform $\lambda$-property}.\smallskip

Under additional assumptions on the Banach space $X$, Aron and Lohman shown that for a compact metric space $K$, the spaces $C(K,X)$, $\ell_{\infty}(X),$ and $\ell_{1}(X)$ have the $\lambda$-property. They exhibit spaces failing the $\lambda$-property, for example, $C(\mathcal{B}_{\mathbb{R}^n},\mathbb{R}^n)$ and $C([0,1],\mathbb{R})$ (cf. Remarks 1.7 and 1.10 in \cite{AronLohm}). It is also shown that, for a strictly convex normed space $X$, $\ell_{1}(X)$ has the $\lambda$-property but not the uniform $\lambda$-property.\smallskip

In their initial program, Aron and Lohman also computed the explicit form of the $\lambda$-function in some concrete cases. For example, if $K$ is a compact metric space and $X$ is an infinite-dimensional strictly convex normed space, then for each element $x$ in the closed unit ball of $C(K,X)$ we have \begin{equation}\label{eq lambda function ellinfty} \lambda (x) = \frac{1+m(x)}{2},\hbox{ where } m(x) = \inf\{ \|x(t)\| : t\in K \}
 \end{equation}(see \cite[Theorem 1.6]{AronLohm}), and a similar formula holds for the elements in the closed unit ball of the space $C([0,1],X)$, where $X$ is a strictly convex normed space satisfying dim$(X_{\mathbb{R}}) \geq 2$, and for the space $\ell_{\infty} (Y)$, where $Y$ is a strictly convex normed space (cf. Theorems 1.9 and 1.13  in \cite{AronLohm}, respectively). Another interesting formula is established for the space $\ell_{1} (Y)$, where $Y$ is a strictly convex normed space. In this case, for each $x=(x_n)$ in the closed unit ball of $\ell_{1} (Y)$ we have \begin{equation}\label{eq lambda function in ell1} \lambda (x) = \frac{1-\|x\|_1+ 2 M(x)}{2},
\end{equation} where $M(x) = \sup\{ \|x_n\| : n\in \mathbb{N} \}$ (see \cite[Theorem 1.11]{AronLohm}). By a little abuse of notation and regarding $\ell_{1} (Y)$ inside $\ell_{\infty} (Y)$, we can denote $M(x) = \|x\|_{_{\infty}}$.\smallskip

There is no doubt on the influential role played by the $\lambda$-function in subsequent papers. For the particular purposes of this note, we highlight Question 4.1 in \cite{AronLohm}, where Aron and Lohman ask ``What spaces of operators have the $\lambda$-property and what does the $\lambda$-function look like for these spaces?'' Motivated by this question, L.G. Brown and G.K. Pedersen determined the exact form of the $\lambda$-function on von Neumann algebras and on unital C$^*$-algebras in \cite{Ped91}, \cite{BrowPed95} and \cite{BrowPed97}. Let $A$ denote a unital C$^*$-algebra whose subgroup of invertible elements is denoted by $A^{-1}$. The $\lambda$-function on $\mathcal{B}_A$ can be only understood thanks to the set $A_q^{-1} = A^{-1} \partial_e(\mathcal{B}_{A}) A^{-1}$ of \emph{quasi-invertible elements} in $A$ (see \cite[Theorem 1.1]{BrowPed95}), and a non-commutative extension of the quantifier $m(x)$ employed in \eqref{eq lambda function ellinfty}. For each element $a\in A$, we set $|a| = (a^* a)^{\frac12}$ and \begin{equation}\label{eq def m_q} m_q (a) = \sup\{ \varepsilon >0 :\  ]0,\varepsilon[ \cap \sigma(|a|) = \emptyset \},
\end{equation} where $\sigma(|a|)$ stands for the spectrum of $|a|.$ It was shown by Brown and Pedersen in \cite[Proposition 1.5]{BrowPed95} that $m_q (a) = \hbox{dist} (a, A\backslash A_q^{-1})$ for every $a\in A$. If we write $\alpha_q (a) = \hbox{dist} (a,A_q^{-1})$, the explicit form of the $\lambda$-function on $\mathcal{B}_{A}$ is given by the following expression \begin{equation}\label{eq lambda function Cstaralgebras} \lambda (a) = \left\{\begin{array}{lc}
                     \frac{1+m_q (a)}{2}, & \hbox{ if } a\in \mathcal{B}_A\cap A_q^{-1}; \\
                     \ & \ \\
                     \frac12 (1-\alpha_q (a)), & \hbox{ if } a\in \mathcal{B}_A\backslash A_q^{-1},
                   \end{array}
 \right.
 \end{equation} (cf. \cite[Theorem 3.7]{BrowPed97}). When $A$ is a von Neumann algebra, Pedersen proved in \cite[Theorem 4.2]{Ped91} that $A$ satisfies the uniform $\lambda$-property, and $\lambda(a) =\frac{1+m_q (a)}{2}$ for every element $a$ in $\mathcal{B}_{A}$. The expression in \eqref{eq lambda function Cstaralgebras} is a non-commutative generalization of the formula given in \eqref{eq lambda function ellinfty}.\smallskip

In the wider setting of JB$^*$-triples, the set of \emph{Brown-Pedersen} \emph{quasi-invertible} elements was introduced and developed by H. Tahlawi, A. Siddiqui, and F. Jamjoom (see \cite{TahSiddJam2013}, and \cite{TahSiddJam2014}). By analogy with the setting of C$^*$-algebras, the set of all Brown-Pedersen quasi-invertible elements in a JB$^*$-triple $E$ is denoted by $E_{q}^{-1}$, and $m_q : E\to \mathbb{R}_0^+$ is defined by $m_q (x)=0$ if $x\notin E_q^{-1}$ and $$m_q (x) = \left(\gamma^{q}(x)\right)^{\frac12}= \min\{t : \ t\in \hbox{Sp}(x) \}= \sup\Big\{\varepsilon>0 : ]0, \varepsilon[\cap \hbox{Sp}(x) = \emptyset\Big\},$$ if $x\in E_q^{-1}$,  where $\gamma^{q}(x)$ is the quadratic conorm introduced and developed in \cite{BurKaMoPeRa} and $\hbox{Sp}(x)$ denotes the triple spectrum of $x$. It is shown in \cite[Theorem 3.1]{JamPerSidTah2015} that $m_q(x) = \hbox{dist} (x, E\backslash E_q^{-1})$ for every $x\in E$. It is proved in \cite[Theorem 4.2]{JamPerSidTah2015} that in a JBW$^*$-triple $W$ the $\lambda$-function is determined by the following expression \begin{equation}\label{eq lambda function JBWtriples} \lambda (a) = \left\{\begin{array}{lc}
                     \frac{1+m_q (a)}{2}, & \hbox{ if } a\in \mathcal{B}_{W}\cap W_q^{-1} \\
                     \ & \ \\
                     \frac12 (1-\alpha_q (a))=\frac12, & \hbox{ if } a\in \mathcal{B}_{W}\backslash W_q^{-1},
                   \end{array}
 \right.
\end{equation}  where $\alpha_q (x) = \hbox{dist} (x,W_q^{-1})$ ($x\in W$). This is a new generalization of the formula given in \eqref{eq lambda function ellinfty}. The form of the $\lambda$-function on the closed unit ball of a general JB$^*$-triple remains as an open problem. Some advances are established for extremally rich JB$^*$-triples (see \cite{JamPerSidTah2016}). For the sake of conciseness, we shall not recall the explicit definition of JB$^*$-triple introduced in \cite{Ka83}, nor the notion of the triple spectrum. The reader interested in such details can consult \cite{BurKaMoPeRa,JamPerSidTah2015}.\smallskip

The question posed by Aron and Lohman on how the $\lambda$-function looks like for other spaces of operators is perfectly valid for the space $C_1(H),$ of all trace class operators on a complex Hilbert space $H$, when this space is regarded as a non-commutative generalization of $\ell_1$. It seems natural to conjecture if the formula given in \eqref{eq lambda function in ell1} is valid for $C_1(H)$. In this note we establish a non-commutative version of this formula by showing that for every complex Hilbert space $H,$ the $\lambda$-function on $\mathcal{B}_{C_1(H)}$ satisfies $$\lambda (a) = \frac{1 - \|a\|_1 + 2 \|a\|_{\infty}}{2},$$ for every $a\in {C_1(H)}$ with $\|a\|_1 \leq 1$ (see Theorem \ref{t lambda function in infinite dimensional case}). This closes a natural conjecture which has been considered in recent years. We provide two different approaches to this result. In the first one we first prove the result in the case in which $H$ is finite-dimensional (see Theorem \ref{t lambda function in finite dimensional case}). In the proof we employ an inequality for the norm in $C_p(\ell_2^n)$ due to L. Mirsky (see Theorem \ref{t Mirsky}), and we build upon it to derive the general case. In the second section we make use of the very basic theory of operator spaces to extend Mirsky's inequality to $C_p(H)$ when $H$ is an  arbitrary complex Hilbert space (see Theorem \ref{t Markus non selfadjoint}). Based on this new inequality we can obtain an alternative proof for our main result.

\section{A lower bound for the $\lambda$-function and a proof through the finite-dimensional case}

Let us fix a complex Hilbert space $H$. The space $C_1(H)$ of all trace class operator on $H$ can be regarded as a non-commutative alter ego of $\ell_1$. This is a particular case of a more general construction which is given to define the space $C_p(H)$ of $p$-Schatten von Neumann operators on $H$ for every $1\leq p<\infty$. As commented in the introduction, the \emph{modulus} of an element $a$ in a C$^*$-algebra $A$ is given by $|a| = (a^*a)^{\frac12}$. If $a$ is an element in the space $K(H),$ of all compact linear operators on $H$, its modulus also is a compact operator, and consequently, it spectrum, $\sigma(|a|),$ is at most countably infinite. The \emph{singular values} of the operator $a$
are precisely the eigenvalues of $|a|$ arranged in decreasing order and repeated according to multiplicity. We write $(\mu_n (a))_n$ for the sequence of singular values of $a$. Since $a\in K(H)$, the sequence $(\mu_n (a))_n$ belongs to $c_0$, and in some cases $(\mu_n (a))_n$ lies in $c_{00}$.\smallskip

For $p\in [1,\infty)$, the set $C_p(H)$ is defined by $$C_p(H) := \left\{ a\in K(H) : \hbox{tr}(|a|^p) = \|a\|_p^p := \left(\sum_{n=1}^{\infty} |\mu_n (a)|^p \right) <\infty \right\},$$ where tr$(.)$ denotes the usual trace on $B(H)$. $C_p(H)$ is a Banach space when equipped with the norm  $\|.\|_p$ (see \cite[Corollary 3.2]{McCarthy67}), and it is called the space of $p$-Schatten von Neumann operators. The predual $B(H)_*$ of $B(H)$ (and the dual $K(H)^*$ of $K(H)$) can be identified with $C_1(H)$ under the isometric linear mapping $a\mapsto \varphi_a$, with $\varphi_a (x) =\hbox{tr} (x a)$ for all $x\in B(H)$, $a\in C_1(H)$ (cf. \cite[Theorem 1.15.3]{S} or \cite[Theorems II.1.6 and II.1.8]{Tak}). Following standard notation, the symbol $C_{\infty} (H)$ will stand for the space $B(H)$, and we write $\|.\|_{\infty}$ for the operator norm on $B(H)$. Let us observe that $C_p(H)\subset K(H)\subset B(H)$ for all $1\leq p<\infty$, and thus $\|a\|_{\infty}$ makes sense for every $a\in C_1(H)$.\smallskip

Elements $a,b\in B(H)$ are called \emph{orthogonal} if $a b^* = b^* a =0$. Let us observe that if $a$ and $b$ are orthogonal elements in $C_p(H)$, then $\|a\pm b\|_p^p = \|a\|_p + \|b\|_p$. For those notions not fully detailed in this paper the reader is referred to \cite{McCarthy67}, \cite[Chapter III]{GohbergKrein}, \cite[\S 9]{DunSchw63}, \cite[Chapter II]{Tak} and \cite[\S 1.15]{S}.\smallskip

C.A. McCarthy proved in \cite{McCarthy67} that, for $1<p<\infty$, the space $C_p(H)$ is uniformly convex and thus it is strictly convex. For a strictly convex normed space $X$ the $\lambda$-function was completely determined by Aron and Lohman, who proved that $\lambda (x) =\frac{1+\|x\|}{2}$ for all $x\in \mathcal{B}_{X}$ (cf. \cite[Proposition 1.2$(e)$]{AronLohm}). Therefore the $\lambda$-function is completely determined on the closed unit ball of $C_p(H)$ for $1<p<\infty$. However, the case $p=1$ is completely different and requires a new geometric argument.\smallskip

Let us recall some notation. Given $\xi,\eta$ in a Hilbert space $H$, the symbol $\eta\otimes \xi$ will denote the element in $C_1(H)= K(H)^* = B(H)_*$ defined by $\eta\otimes \xi (x) = \langle x(\eta) |\xi\rangle$ ($x\in B(H)$). Every trace class operator $a$ in $C_1(H)$ can be written as a (possibly finite) sum
\begin{equation}\label{eq spectral resolution in trace class}
a= \sum_{n=1}^{\infty} \mu_n \eta_n \otimes \xi_n,
\end{equation} where $(\mu_n)\subset \mathbb{R}_0^{+}$,
$(\xi_n)$, $(\eta_n)$ are orthonormal systems in $H$,  and
$\displaystyle \|a\|_1=\sum_{n=1}^{\infty} \mu_n$ (see, for example, \cite[Corollary 1.15.5]{S} or \cite[Theorem
II.1.6]{Tak}).\smallskip

In order to deal with the $\lambda$-function we shall also need a good knowledge of the set of extreme points of the closed unit ball of $C_1(H)$. It is known that $\partial_e (\mathcal{B}_{C_1(H)}) = \left\{ \eta\otimes \xi : \eta, \xi\in H, \ \|\eta\|=\| \xi\|=1   \right\}.$\smallskip

We can establish next a lower bound of the $\lambda$-function on $\mathcal{B}_{C_1(H)}$.

\begin{proposition}\label{p lower bound for lambda} Let $H$ be a complex Hilbert space. Then $C_1(H)$ satisfies the $\lambda$-property, and the inequality $$\lambda (a) \geq \frac{1 - \|a\|_1 + 2 \|a\|_{\infty}}{2},$$ holds for every $a\in \mathcal{B}_{C_1(H)}$.
\end{proposition}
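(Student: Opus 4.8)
The plan is to prove the displayed inequality directly, by producing, for each $a\in\mathcal{B}_{C_1(H)}$, an amenable triplet $(e,y,t_0)$ with $t_0=\frac{1-\|a\|_1+2\|a\|_\infty}{2}$; the $\lambda$-property then comes for free, since this quantity is strictly positive on all of $\mathcal{B}_{C_1(H)}$ (it equals $\|a\|_\infty+\frac{1-\|a\|_1}{2}\geq\|a\|_\infty>0$ when $a\neq 0$, and it is $\frac12$ when $a=0$). The case $a=0$ is handled on its own: choosing a unit vector $\xi\in H$ and setting $e=\xi\otimes\xi$, the element $-e=(-\xi)\otimes\xi$ is again an extreme point, so $0=\frac12 e+\frac12(-e)$ gives $\lambda(0)\geq\frac12$.

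For $a\neq 0$ I would start from the spectral resolution \eqref{eq spectral resolution in trace class}, writing $a=\sum_n\mu_n\,\eta_n\otimes\xi_n$ with $\mu_1\geq\mu_2\geq\cdots>0$, with $(\eta_n),(\xi_n)$ orthonormal systems, $\|a\|_1=\sum_n\mu_n$, and $\|a\|_\infty=\mu_1$. Take $e=\eta_1\otimes\xi_1\in\partial_e(\mathcal{B}_{C_1(H)})$ and $t_0=\frac{1-\|a\|_1+2\mu_1}{2}=\mu_1+\frac{1-\|a\|_1}{2}$. A two-line check using $\mu_1\leq\|a\|_1\leq 1$ shows $0\leq\mu_1\leq t_0\leq 1$, with $t_0=1$ only if $a=\eta_1\otimes\xi_1$ is itself extreme, in which case $\lambda(a)=1\geq t_0$ and there is nothing to prove. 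Assuming $t_0<1$, put $y=\frac{1}{1-t_0}\bigl(a-t_0 e\bigr)$, so that $a=t_0 e+(1-t_0)y$ by construction.

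The crux is the estimate $\|y\|_1\leq 1$. Here the point is that $\mu_1-t_0=\frac{\|a\|_1-1}{2}\leq 0$, so $a-t_0 e=(\mu_1-t_0)\,\eta_1\otimes\xi_1+\sum_{n\geq 2}\mu_n\,\eta_n\otimes\xi_n$ is still expressed through the orthonormal systems $(\eta_n),(\xi_n)$; hence its singular values are $|\mu_1-t_0|$ together with $\{\mu_n:n\geq 2\}$ (equivalently, one applies the orthogonality remark recalled above to the rank-one summands), and $\|a-t_0 e\|_1=|\mu_1-t_0|+\sum_{n\geq 2}\mu_n=\frac{1-\|a\|_1}{2}+(\|a\|_1-\mu_1)=\frac{1+\|a\|_1-2\mu_1}{2}=1-t_0$. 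Therefore $\|y\|_1=1$, the triplet $(e,y,t_0)$ is amenable to $a$, and $\lambda(a)\geq t_0=\frac{1-\|a\|_1+2\|a\|_\infty}{2}$, as desired.

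I do not anticipate a genuine obstacle. The argument hinges on two design choices: letting $e$ be the leading rank-one summand of the singular value decomposition, and calibrating $t_0$ so that the remainder $a-t_0e$ lies exactly on the sphere of radius $1-t_0$; everything else is the bookkeeping above, together with the standard facts that $\mu_1=\|a\|_\infty$ and that the trace norm of $\sum_n c_n\,\eta_n\otimes\xi_n$ equals $\sum_n|c_n|$ for orthonormal $(\eta_n),(\xi_n)$. The only mild nuisance is keeping track of the degenerate cases ($a=0$, $t_0=1$, and $a$ of finite rank, where the sums are finite), each of which is immediate.
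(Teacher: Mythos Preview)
Your argument is correct. Both proofs work inside the rank-one summands of the singular value decomposition, but the paper routes the estimate through the closed subspace $Y=\overline{\spn}\{\eta_n\otimes\xi_n\}\cong\ell_1$, observes that $\partial_e(\mathcal{B}_Y)\subset\partial_e(\mathcal{B}_{C_1(H)})$, and then quotes Aron--Lohman's formula for $\ell_1$ together with their comparison principle $\lambda_Y\leq\lambda_{C_1(H)}$. You instead build the optimal amenable triplet by hand, choosing $e=\eta_1\otimes\xi_1$ and calibrating $t_0$ so that $\|a-t_0e\|_1=1-t_0$. Your construction is exactly the one the paper writes down later, in the proof of Theorem~\ref{t lambda function in infinite dimensional case}, to show that $\lambda(a)$ is attained; so what you gain is a self-contained argument that does not cite the $\ell_1$ theorem, at the cost of redoing that (short) computation. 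Either way, the essential content is the trace-norm additivity on mutually orthogonal rank-one pieces.
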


\begin{proof} We begin by showing that $C_1(H)$ satisfies the $\lambda$-property. To this aim, let us fix an arbitrary $a\in \mathcal{B}_{C_1(H)}$, and let us write $a$ in the form $\displaystyle a= \sum_{n=1}^{\infty} \mu_n \eta_n \otimes \xi_n,$ where $(\mu_n)\subset \mathbb{R}_0^{+}$,
$(\xi_n)$, $(\eta_n)$ are orthonormal systems in $H$, and $\displaystyle \|a\|_1 =\sum_{n=1}^{\infty} \mu_n \leq 1$ (let us observe that $\mu_1,\ldots,\mu_n$ do not coincide, in general, with the singular values of $a$ because they can appear in different order). If $\mu_1=1$, then $a= \eta_1 \otimes \xi_1\in \partial_e(\mathcal{B}_{C_1(H)})$, and hence $\lambda (a) = 1$ (see \cite[Proposition 1.2$(a)$]{AronLohm}). There is no loss of generality in assuming that $1> \mu_1>0$. The element $e_1 = \eta_1 \otimes \xi_1$ lies in $\partial_e(\mathcal{B}_{C_1(H)})$, and we can write $a = \mu_1 e_1 + (1-\mu_1) y$ where $\displaystyle y = \sum_{n=2}^{\infty} \frac{\mu_n}{1-\mu_1} \eta_n \otimes \xi_n.$ Since, for every $n\neq m$, $\eta_n \otimes \xi_n$ is orthogonal to $\eta_m \otimes \xi_m$ it can be easily seen that $\displaystyle \|y\|_1 = \sum_{n=2}^{\infty} \frac{\mu_n}{1-\mu_1} = 1.$ Therefore $\lambda (a) \geq \mu_1>0,$ which shows that $C_1(H)$ satisfies the $\lambda$-property.\smallskip

Let $Y$ denote the norm closed subspace of $C_1(H)$ generated by $\{ \eta_n \otimes \xi_n : n\in \mathbb{N}\}$. Clearly $Y$ is isometrically isomorphic to $\ell_1$ and $a\in Y$. Therefore $Y$ satisfies the $\lambda$-property by \cite[Theorem 1.11]{AronLohm}. To avoid confusion, let $\lambda_Y$ and $\lambda_{C_1(H)}$ denote the $\lambda$-functions of $Y$ and $C_1(H)$, respectively. Another clear property is that $\partial_e(\mathcal{B}_Y) \subset \partial_e(\mathcal{B}_{C_1(H)})$. We are in position to apply Proposition 1.2$(f)$ to deduce that $\lambda_Y (x) \leq \lambda_{C_1(H)} (x)$ for every $x\in \mathcal{B}_Y$, and in particular, $\lambda_Y (a) \leq \lambda_{C_1(H)} (a)$. Identifying $Y$ with $\ell_1$, an application of \cite[Theorem 1.11]{AronLohm} implies that $\lambda_Y (a) = \frac{1-\|a\|_1+ 2 M(a)}{2},$ where $M(a) = \sup\{\mu_n : n\in \mathbb{N} \}=\max\{\mu_n : n\in \mathbb{N} \}$.\smallskip

Finally, when $a$ is regarded as an operator in $K(H)$, it can be easily check that $\|a\|_{\infty} = \sup\{\mu_n : n\in \mathbb{N} \}.$ We have therefore shown that $$\lambda_{C_1(H)} (a) \geq \frac{1-\|a\|_1+ 2 \|a\|_{\infty}}{2},$$ which concludes the proof.
\end{proof}

The reciprocal inequality to that established in Proposition 2.1 will be obtained with techniques of spectral theory.\smallskip

Let us observe that every element $e\in \partial_e(\mathcal{B}_{C_1(H)})$, when regarded in $K(H)$, is a minimal (i.e. rank-one) partial isometry, that is, $e e^* e =e$. Actually, every minimal partial isometry $e\in K(H)$ lies in the unit sphere of $C_p(H)$, for every $1\leq p\leq \infty$. Let $\mathcal{U}_{min} (H)$ denote the set of all minimal partial isometries in $B(H)$. In a recent paper, jointly written with F.J. Fern{\'a}ndez-Polo and E. Jord{\'a}, we prove that for a finite-dimensional complex Hilbert space $H$, $1<p<\infty$, and fixed $a\in C_p(H)$ and $\gamma \geq 1$, the minimum value of the function $f_{a,\gamma} : \mathcal{U}_{min} (H)\to \mathbb{R}_0^+,$ $e\mapsto f_{a,\gamma}(e):=\|a-\gamma e\|_p^p,$ is $$ (\gamma-\mu_1(a))^p+\sum_{j=2}^n \mu_j(a)^p = (\gamma-\mu_1(a))^p + (\|a\|_p^p-\mu_1(a)^p),$$ where $\mu_1(a)\geq \ldots \geq \mu_n(a)$ are the singular values of $a$ (cf. \cite[$(11)$ in the proof of Proposition 2.14]{FerJorPer2018}). Thanks to the differentiability of the norm $\|.\|_p$, together with a result of R. Bhatia and P. Semrl in \cite{BhatSemr96}, the points at which $f_{a,\gamma}$ attains its minimum value are completely determined in \cite[$(12)$ in the proof of Proposition 2.14]{FerJorPer2018}. The lacking of a good differentiability of the norm of $C_1(H)$ makes invalid parts of the arguments applied in the case of $C_p(H)$ with $1<p<\infty$. However, the minimum value of this mapping can be also determined in the proof of our next result.\smallskip

We recall first a theorem due to L. Mirsky.

\begin{theorem}{\rm(\label{t Mirsky} or \cite{Mirsky60}\cite[Theorem 9.8]{Bhat2007})} Let $a$ and $b$ be $n\times n$ matrices with complex entries, and let $\vertiii{.}$ be a unitarily-invariant norm on $M_n(\mathbb{C})$. Then $$ \vertiii{\hbox{diag} (\mu_1(a), \ldots, \mu_n(a)) - \hbox{diag} (\mu_1(b), \ldots, \mu_n(b))} \leq \vertiii{a-b},$$ where $\hbox{diag} (., \ldots, .)$ stands for the diagonal matrix whose entries are given by the corresponding list.$\hfill\Box$
\end{theorem}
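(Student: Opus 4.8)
The plan is to reduce the statement to a scalar \emph{weak majorization} inequality between the two relevant singular-value sequences, and then to invoke the classical correspondence between unitarily-invariant norms and symmetric gauge functions. Recall von Neumann's theorem (see \cite[Chapter IV]{Bhat2007}): to every unitarily-invariant norm $\vertiii{\cdot}$ on $M_n(\mathbb{C})$ there corresponds a symmetric gauge function $\Phi$ on $\mathbb{R}^n$ such that $\vertiii{c} = \Phi(\mu_1(c),\ldots,\mu_n(c))$ for every $c\in M_n(\mathbb{C})$. Since the diagonal matrix $d:=\hbox{diag}(\mu_1(a)-\mu_1(b),\ldots,\mu_n(a)-\mu_n(b))$ has modulus $\hbox{diag}(|\mu_1(a)-\mu_1(b)|,\ldots,|\mu_n(a)-\mu_n(b)|)$, its singular values are exactly the numbers $|\mu_i(a)-\mu_i(b)|$, and the inequality to be proved reads
$$\Phi\big(|\mu_1(a)-\mu_1(b)|,\ldots,|\mu_n(a)-\mu_n(b)|\big)\leq \Phi\big(\mu_1(a-b),\ldots,\mu_n(a-b)\big).$$
Because symmetric gauge functions are monotone with respect to weak majorization of nonnegative vectors (equivalently, by Ky Fan's dominance theorem, see \cite[Chapter IV]{Bhat2007}), it suffices to establish the weak submajorization
$$\sum_{i=1}^{k} \big(|\mu(a)-\mu(b)|\big)_i^{\downarrow}\ \leq\ \sum_{i=1}^{k}\mu_i(a-b), \qquad k=1,\ldots,n,$$
where $(\,\cdot\,)_i^{\downarrow}$ denotes the $i$-th largest entry of the indicated vector.

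The second step is to obtain this majorization from the Hermitian (eigenvalue) case via a dilation. For $c\in M_n(\mathbb{C})$ I would consider the Hermitian matrix $J(c)=\bigl(\begin{smallmatrix}0 & c\\ c^{*} & 0\end{smallmatrix}\bigr)\in M_{2n}(\mathbb{C})$, whose eigenvalues in decreasing order are $\mu_1(c)\geq\cdots\geq\mu_n(c)\geq -\mu_n(c)\geq\cdots\geq -\mu_1(c)$. Since $J$ is linear, $J(a)-J(b)=J(a-b)$, so I can apply the Lidskii--Wielandt theorem (see \cite[Chapter III]{Bhat2007}) to the Hermitian matrices $J(a)$ and $J(b)$: the vector $\lambda^{\downarrow}(J(a))-\lambda^{\downarrow}(J(b))$ is (strongly) majorized by $\lambda^{\downarrow}(J(a-b))$. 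The entries of the first vector are precisely $\pm(\mu_j(a)-\mu_j(b))$, $j=1,\ldots,n$, so for every $k\leq n$ its $k$ largest entries are the $k$ largest among $|\mu_1(a)-\mu_1(b)|,\ldots,|\mu_n(a)-\mu_n(b)|$; likewise, the $k$ largest entries of $\lambda^{\downarrow}(J(a-b))$ are $\mu_1(a-b),\ldots,\mu_k(a-b)$. Reading off the first $k\leq n$ partial sums of the majorization relation then yields exactly the weak submajorization displayed above.

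Combining the two steps finishes the argument: the weak submajorization feeds into the monotonicity of $\Phi$ to give $\vertiii{d}\leq\vertiii{a-b}$, which is the assertion. I expect the genuine content to be concentrated in the Hermitian step, namely the Lidskii--Wielandt majorization $\lambda^{\downarrow}(A)-\lambda^{\downarrow}(B)\prec\lambda^{\downarrow}(A-B)$ for Hermitian $A,B$; this is itself a nontrivial theorem of perturbation theory, which I would quote from \cite[Chapter III]{Bhat2007} rather than reprove. The remaining ingredients --- von Neumann's identification of unitarily-invariant norms with symmetric gauge functions, the elementary computation of the singular values of a diagonal matrix, and the passage from the strong eigenvalue majorization to the weak majorization of the absolute values through the dilation --- are routine once Lidskii--Wielandt is in hand.
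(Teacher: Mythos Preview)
The paper does not give its own proof of this statement: Theorem~\ref{t Mirsky} is quoted from \cite{Mirsky60} and \cite[Theorem~9.8]{Bhat2007} and closed with a box, and later in Section~3 the paper merely remarks that Bhatia's proof proceeds ``via Wielandt's device in $M_n(\mathbb{C})$, from a previous version for hermitian operators also due to Mirsky.'' So there is no in-paper proof to compare your proposal against.

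Your argument is correct and is essentially the standard route sketched in that remark. You invoke von Neumann's identification of unitarily-invariant norms with symmetric gauge functions and Ky Fan dominance to reduce the assertion to the weak submajorization $\bigl(|\mu_j(a)-\mu_j(b)|\bigr)_j \prec_w \bigl(\mu_j(a-b)\bigr)_j$; then you obtain this from the Lidskii--Wielandt majorization for Hermitian matrices applied to the Wielandt dilations $J(a),J(b)\in M_{2n}(\mathbb{C})$. The computation of the decreasing rearrangement of $\lambda^{\downarrow}(J(a))-\lambda^{\downarrow}(J(b))$ (each difference $\mu_j(a)-\mu_j(b)$ appears once with each sign, so the top $k\le n$ entries are the $k$ largest $|\mu_j(a)-\mu_j(b)|$) and of $\lambda^{\downarrow}(J(a-b))$ is accurate, and reading off the first $k\le n$ partial sums yields exactly the required inequality. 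The only remark is that you are effectively reproving Mirsky's Hermitian inequality by quoting Lidskii--Wielandt; Bhatia's presentation packages that step as a separate lemma, but the content is the same.
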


\begin{theorem}\label{t lambda function in finite dimensional case} Let $H$ be a finite-dimensional complex Hilbert space. Then the identity $$\lambda (a) = \frac{1 - \|a\|_1 + 2 \|a\|_{\infty}}{2},$$ holds for every $a\in \mathcal{B}_{C_1(H)}$.
\end{theorem}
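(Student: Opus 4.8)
The plan is to establish the reverse inequality to Proposition~\ref{p lower bound for lambda}, namely $\lambda(a) \leq \tfrac{1 - \|a\|_1 + 2\|a\|_\infty}{2}$ for every $a \in \mathcal{B}_{C_1(H)}$ with $H$ finite-dimensional, say $\dim H = n$. Fix $a$ with $\|a\|_1 \leq 1$, and suppose $(e, y, t) \in \partial_e(\mathcal{B}_{C_1(H)}) \times \mathcal{B}_{C_1(H)} \times [0,1]$ is amenable to $a$, i.e. $a = te + (1-t)y$. Since $e$ is a minimal partial isometry and $\|y\|_1 \leq 1$, I want to bound $t$ from above. Writing $b := a - te = (1-t)y$, we have $\|b\|_1 = (1-t)\|y\|_1 \leq 1-t$, so it suffices to show $\|a - te\|_1 \geq \|a\|_1 - t - 2\|a\|_\infty + 2 \min(t, \ldots)$; more precisely the target is that for \emph{every} minimal partial isometry $e$ and every $t \in [0,1]$, $\|a - te\|_1 \geq 1 - t$ forces $t \leq \tfrac{1 - \|a\|_1 + 2\|a\|_\infty}{2}$. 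So the real task is to compute (or lower-bound) $\min_{e \in \mathcal{U}_{min}(H)} \|a - te\|_1$.

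First I would reduce to the singular values via Mirsky's inequality (Theorem~\ref{t Mirsky}). Apply it with the unitarily-invariant norm $\|\cdot\|_1$: since $te$ has singular values $(t, 0, \ldots, 0)$ in decreasing order (assuming $t \geq 0$), we get
\begin{equation*}
\|a - te\|_1 \geq \|\mathrm{diag}(\mu_1(a), \ldots, \mu_n(a)) - \mathrm{diag}(t, 0, \ldots, 0)\|_1 = |\mu_1(a) - t| + \sum_{j=2}^n \mu_j(a).
\end{equation*}
Here I should double-check the ordering: if $t > \mu_1(a)$ then $t$ is itself the largest, and the inequality still reads $|\mu_1(a) - t| + \sum_{j \geq 2}\mu_j(a)$ after matching the sorted lists, which is what we want. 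Using $\sum_{j=1}^n \mu_j(a) = \|a\|_1$ and $\mu_1(a) = \|a\|_\infty$, the right-hand side equals $|\,\|a\|_\infty - t\,| + \|a\|_1 - \|a\|_\infty$.

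Now combine with the amenability constraint. From $a = te + (1-t)y$ we get $\|a - te\|_1 = (1-t)\|y\|_1 \leq 1 - t$, hence
\begin{equation*}
|\,\|a\|_\infty - t\,| + \|a\|_1 - \|a\|_\infty \leq 1 - t.
\end{equation*}
If $t \leq \|a\|_\infty$ this gives $\|a\|_\infty - t + \|a\|_1 - \|a\|_\infty \leq 1 - t$, i.e. $\|a\|_1 \leq 1$, no constraint on $t$ at all — so in this regime we only learn $t \leq \|a\|_\infty \leq \tfrac{1 - \|a\|_1 + 2\|a\|_\infty}{2}$ (the last inequality being exactly $\|a\|_1 \leq 1$, rearranged). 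If instead $t > \|a\|_\infty$, the inequality becomes $t - \|a\|_\infty + \|a\|_1 - \|a\|_\infty \leq 1 - t$, i.e. $2t \leq 1 - \|a\|_1 + 2\|a\|_\infty$, which is precisely the desired bound. In either case $t \leq \tfrac{1 - \|a\|_1 + 2\|a\|_\infty}{2}$; taking the supremum over amenable triplets yields $\lambda(a) \leq \tfrac{1 - \|a\|_1 + 2\|a\|_\infty}{2}$, and together with Proposition~\ref{p lower bound for lambda} the equality follows.

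The main obstacle I anticipate is subtler than the above outline suggests: Mirsky's inequality as stated handles $a - te$ with $t \geq 0$ and $e$ a \emph{fixed} minimal partial isometry, but the singular values of $te$ being $(t,0,\dots,0)$ requires $t \geq 0$, which is fine here since $t \in [0,1]$. The genuinely delicate point is whether the supremum defining $\lambda(a)$ is actually attained — but since we only need the \emph{upper} bound on $\lambda$, attainment is irrelevant; every amenable triplet obeys the bound, so the sup does too. A secondary point to verify carefully is the case $\mu_1(a) = 1$ (so $\|a\|_1 = 1$ and $a$ is itself extreme), where the formula gives $\lambda(a) = 1$, consistent with \cite[Proposition 1.2$(a)$]{AronLohm}; this is a boundary case already dispatched in the proof of Proposition~\ref{p lower bound for lambda} and causes no trouble. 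I would also remark that this argument simultaneously identifies the minimum value of $e \mapsto \|a - \gamma e\|_1$ over $\mathcal{U}_{min}(H)$ as $(\gamma - \mu_1(a)) + \sum_{j \geq 2}\mu_j(a)$ when $\gamma \geq \mu_1(a)$ (achieved, e.g., by the partial isometry from the polar decomposition of $a$ restricted to the top singular direction), paralleling the $C_p$ formula of \cite{FerJorPer2018} quoted above, with the $\ell_1$-norm replacing the $\ell_p$-norm on the singular value sequence.
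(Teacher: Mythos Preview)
Your proof is correct and follows essentially the same approach as the paper's: both use Proposition~\ref{p lower bound for lambda} for the lower bound and Mirsky's theorem (Theorem~\ref{t Mirsky}) to establish the key estimate $\|a-te\|_1 \geq |t-\mu_1(a)| + (\|a\|_1-\mu_1(a))$, then combine it with the amenability constraint $\|a-te\|_1\leq 1-t$. Your case split on $t\leq \|a\|_\infty$ versus $t>\|a\|_\infty$ is in fact slightly more streamlined than the paper's split on $\|a\|_1<1$ versus $\|a\|_1=1$, as it treats both situations uniformly without the separate contradiction argument.
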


\begin{proof} Proposition \ref{p lower bound for lambda} assures that \begin{equation}\label{eq bound below in thm} \lambda (a) \geq \frac{1 - \|a\|_1 + 2 \|a\|_{\infty}}{2},
 \end{equation} for every $a\in \mathcal{B}_{C_1(H)}$. We shall next prove the reciprocal inequality.\smallskip

We identify $B(H)$ with $M_n (\mathbb{C})$. Let us fix $a\in {C_1(H)}$ and $t\in ]0,1]$. We consider the mapping $f_{a,t} : \mathcal{U}_{min} (H)\to \mathbb{R}_0^+,$ $f_{a,t}(e):=\|a- t e\|_1$. We claim that \begin{equation}\label{eq minimum value of fa p=1}\hbox{ the minimum value of the mapping $f_{a,t}$ in $\mathcal{U}_{min} (H)$ is}
\end{equation} $$\displaystyle |t-\mu_1(a)|+\sum_{j=2}^n \mu_j(a) = |t-\mu_1(a)| + (\|a\|_1-\mu_1(a)),$$ where $\mu_1(a)\geq \ldots \geq \mu_n(a)$ are the singular values of $a$.\smallskip

To see the claim let $e$ be a minimal partial isometry in $B(H)$. We write $a$ in the form $\displaystyle a= \sum_{n=1}^m \mu_n (a) \eta_n\otimes \xi_n$, where $\{\xi_1,\ldots,\xi_m\}$ and $\{\eta_1,\ldots,\eta_m\}$ are orthonormal basis of $H$. We recall that the norm $\|.\|_1$ is unitarily-invariant (see \cite[page 28]{Bhat2007}). We can therefore apply Mirsky's theorem (see Theorem \ref{t Mirsky}) to $a$ and $e$ to conclude that $$ \| \hbox{diag} (\mu_1(a), \ldots, \mu_n(a)) - \hbox{diag} (\mu_1(t e), \ldots, \mu_n(t e)) \|_1 \leq \|a- t e\|_1.$$ Since $\hbox{diag} (\mu_1(t e), \ldots, \mu_n(t e)) = \hbox{diag} (t, 0, \ldots, 0) $, it follows that $$f_{a,t} (\eta_1\otimes \xi_1) =  |t-\mu_1(a)|+\sum_{j=2}^n \mu_j(a) = |t-\mu_1(a)| + (\|a\|_1-\mu_1(a)) $$ $$= \| \hbox{diag} (\mu_1(a), \ldots, \mu_n(a)) - \hbox{diag} (\mu_1(t e), \ldots, \mu_n(t e)) \|_1 \leq \|a-t e\|_1 = f_{a,t} (e),$$ which proves the claim in \eqref{eq minimum value of fa p=1}.\smallskip

Let us observe that $\|a\|_{\infty} = \max\{\mu_1 (a),\ldots,\mu_n(a)\} = \mu_1 (a).$\smallskip

We assume first that $\|a\|_1 <1$. We deduce from \eqref{eq bound below in thm} that $$\lambda (a) \geq \frac{1 - \|a\|_1 + 2 \|a\|_{\infty}}{2} > \|a\|_{\infty} = \mu_1(a).$$ Therefore, in order to compute the supremum in the definition of $\lambda(a)$, we can reduce our study to those triplets $(e, y, t)\in \partial_e (\mathcal{B}_{C_1(H)})\times \mathcal{B}_{C_1(H)} \times ]0,1]$ which are amenable to $a$ with $t> \mu_1(a)$. Under these assumptions $a = t e + (1-t) y $ and hence $f_{a,t} (e) = \|a -t e \|_1 = \|(1-t) y \|_1 \leq 1-t$, which by \eqref{eq minimum value of fa p=1} implies that $$ t-\mu_1(a) + (\|a\|_1-\mu_1(a))  = |t-\mu_1(a)| + (\|a\|_1-\mu_1(a)) \leq f_{a,t} (e) \leq 1-t,$$ or equivalently $$ 2 t \leq 1 - \|a\|_1 + 2\mu_1(a) = 1 - \|a\|_1 + 2 \| a\|_{\infty}.$$ We have shown that $$\lambda (a) = \sup \left\{t : \begin{array}{c}
                                                                                                                    (e, y, t) \in \partial_e (\mathcal{B}_{X})\times \mathcal{B}_{X} \times [0,1] \\
                                                                                                                    \hbox{ amenable to } a
                                                                                                                  \end{array}\right\}\leq  \frac{1 - \|a\|_1 + 2 \|a\|_{\infty}}{2},$$
and thus $\lambda (a) = \frac{1 - \|a\|_1 + 2 \|a\|_{\infty}}{2}$.\smallskip

Let us assume next that $\|a\|_1 =1$. By \eqref{eq bound below in thm} we have $\lambda (a) \geq \|a\|_{\infty} = \mu_1(a)$. If $\lambda (a) > \|a\|_{\infty} = \mu_1(a),$ then there exists a triplet $(e, y, t)\in \partial_e (\mathcal{B}_{C_1(H)})\times \mathcal{B}_{C_1(H)} \times ]0,1]$ which is amenable to $a$ and $t>\mu_1(a)=\|a\|_{\infty}$. In this case, the claim in \eqref{eq minimum value of fa p=1} assures that
$$ t-\mu_1(a) + (\|a\|_1-\mu_1(a))  \leq f_{a,t} (e)= \|a-t e\|_1 = \|(1-t) y\|_1 \leq 1-t,$$ and hence $2t \leq 2 \mu_1(a) = 2 \|a\|_{\infty}$, which is impossible. Consequently, $\lambda (a) = \|a\|_{\infty} = \frac{1 - \|a\|_1 + 2 \|a\|_{\infty}}{2}.$
\end{proof}

It is known that every finite-dimensional Banach space satisfies the uniform $\lambda$-property (cf. \cite[Theorem 1.16]{AronLohm}); this is the case of $C_1(H)$ when $H$ is finite-dimensional. Therefore, the conclusion in the above Theorem \ref{t lambda function in finite dimensional case} can be strengthened by saying that $C_1(H)$ satisfies the uniform $\lambda$-property if $H$ is finite-dimensional. We shall see later that, as in the case of $\ell_1(X)$ in \cite[Theorem 1.11]{AronLohm}, the space $C_1(H)$ fails the uniform $\lambda$-property when $H$ is infinite-dimensional.\smallskip

Helped by the result proved in the finite-dimensional case, we can now establish our main result in full generality.

\begin{theorem}\label{t lambda function in infinite dimensional case} Let $H$ be a complex Hilbert space. Then the identity $$\lambda (a) = \frac{1 - \|a\|_1 + 2 \|a\|_{\infty}}{2},$$ holds for every $a\in \mathcal{B}_{C_1(H)}$. Moreover, $\lambda(a)$ is attained.
\end{theorem}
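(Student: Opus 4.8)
The plan is to complement the lower bound from Proposition \ref{p lower bound for lambda} with a matching upper bound obtained by compressing an amenable decomposition to finite-dimensional corners of $H$ and then invoking Theorem \ref{t lambda function in finite dimensional case}; the attainment statement will come for free from an explicit amenable triplet. If $a\in\partial_e(\mathcal B_{C_1(H)})$ then $\lambda(a)=1$ and $\|a\|_1=\|a\|_\infty=1$, so the formula holds trivially and is attained (take $t=1$); we may therefore assume $a\notin\partial_e(\mathcal B_{C_1(H)})$. Fix the representation $a=\sum_{n\ge1}\mu_n\,\eta_n\otimes\xi_n$ with the singular values $\mu_n=\mu_n(a)$ arranged in \emph{decreasing} order, so that $\mu_1=\|a\|_\infty<1$ and $\sum_{n\ge1}\mu_n=\|a\|_1\le1$ (if $a=0$ we put $\mu_1:=0$ and let $\eta_1\otimes\xi_1$ be any fixed extreme point). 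Set $e_1:=\eta_1\otimes\xi_1\in\partial_e(\mathcal B_{C_1(H)})$ and $t_0:=\frac{1-\|a\|_1+2\|a\|_\infty}{2}$; a short check gives $0<t_0\le1$, $t_0\ge\mu_1$ and $1-t_0>0$. Since the operators $\eta_n\otimes\xi_n$ are pairwise orthogonal in $C_1(H)$, we have $a-t_0e_1=-(t_0-\mu_1)\eta_1\otimes\xi_1+\sum_{n\ge2}\mu_n\,\eta_n\otimes\xi_n$, whence $\|a-t_0e_1\|_1=(t_0-\mu_1)+(\|a\|_1-\mu_1)=1-t_0$. Consequently $y:=(1-t_0)^{-1}(a-t_0e_1)$ lies on the unit sphere of $C_1(H)$, the triplet $(e_1,y,t_0)$ is amenable to $a$, and so $\lambda(a)\ge t_0$; once we establish the reverse inequality, this same triplet shows that the supremum defining $\lambda(a)$ is attained.

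For the reverse inequality $\lambda(a)\le t_0$, fix an arbitrary triplet $(e,y,t)$ amenable to $a$, with $e\in\partial_e(\mathcal B_{C_1(H)})$, $y\in\mathcal B_{C_1(H)}$ and $t\in]0,1]$ (the value $t=0$ can be ignored, since $t_0>0$), and write $e=v\otimes u$ for unit vectors $u,v\in H$. Given $\varepsilon>0$, choose $N\in\mathbb{N}$ with $\sum_{n>N}\mu_n(a)<\varepsilon$ and put $H_0:=\spn\{\eta_1,\dots,\eta_N,\xi_1,\dots,\xi_N,u,v\}$, a finite-dimensional subspace of $H$; let $P$ be the orthogonal projection of $H$ onto $H_0$. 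Compressing the identity $a=te+(1-t)y$ by $P$, and observing that $PeP=e$ because the initial and final spaces of the rank-one partial isometry $e$ lie in $H_0$, we obtain $PaP=te+(1-t)\,PyP$. Regarding $PaP$, $e$ and $PyP$ as operators on $H_0$ and using that $\|PxP\|_1\le\|x\|_1$ and $\|PxP\|_\infty\le\|x\|_\infty$ for every $x$, we see that $PyP$ and $PaP$ belong to $\mathcal B_{C_1(H_0)}$, while $e\in\partial_e(\mathcal B_{C_1(H_0)})$. Hence $(e,PyP,t)$ is amenable to $PaP$ in $C_1(H_0)$, and Theorem \ref{t lambda function in finite dimensional case}, applied to the finite-dimensional space $H_0$, yields $t\le\lambda_{C_1(H_0)}(PaP)=\frac{1-\|PaP\|_1+2\|PaP\|_\infty}{2}$.

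To finish we let $H_0$ exhaust $H$. Writing $a_N:=\sum_{n=1}^N\mu_n(a)\,\eta_n\otimes\xi_n$ we have $PaP-a_N=P(a-a_N)P$, so $\|PaP-a_N\|_1\le\|a-a_N\|_1=\sum_{n>N}\mu_n(a)<\varepsilon$; since the $\eta_n\otimes\xi_n$ are pairwise orthogonal, $\|a_N\|_1=\sum_{n=1}^N\mu_n(a)>\|a\|_1-\varepsilon$, and therefore $\|PaP\|_1>\|a\|_1-2\varepsilon$, while $\|PaP\|_\infty\le\|a\|_\infty$. Substituting into the inequality of the previous paragraph gives $t\le\frac{1-(\|a\|_1-2\varepsilon)+2\|a\|_\infty}{2}=t_0+\varepsilon$. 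Letting $\varepsilon\to0$ and then taking the supremum over all triplets amenable to $a$, we obtain $\lambda(a)\le t_0$. Together with the first paragraph this proves $\lambda(a)=t_0=\frac{1-\|a\|_1+2\|a\|_\infty}{2}$, and the triplet $(e_1,y,t_0)$ realizes this value.

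The step I expect to require the most care is the middle one: checking that compression to a well-chosen finite-dimensional corner $H_0$ keeps the extreme point $e$ an extreme point of $\mathcal B_{C_1(H_0)}$, keeps the second summand inside the closed unit ball (monotonicity of the trace norm under compression), and moves $\|a\|_1$ and $\|a\|_\infty$ by no more than a quantity controlled by the singular-value tail $\sum_{n>N}\mu_n(a)$. None of these facts is deep, but they are where the bookkeeping lives. An alternative proof, presented in the next section, bypasses the compression entirely: one first extends Mirsky's inequality (Theorem \ref{t Mirsky}) to $C_p(H)$ for an arbitrary complex Hilbert space $H$ (Theorem \ref{t Markus non selfadjoint}) and then repeats the argument of Theorem \ref{t lambda function in finite dimensional case} word for word.
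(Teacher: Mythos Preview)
Your proof is correct and follows essentially the same approach as the paper: you compress an arbitrary amenable triplet to a finite-dimensional corner $H_0$ containing both the initial and final spaces of $e$ and enough of the spectral data of $a$, invoke Theorem \ref{t lambda function in finite dimensional case} there, and control the perturbation of $\|\cdot\|_1$ and $\|\cdot\|_\infty$ by the tail of the singular values. The only cosmetic differences are that you construct $H_0$ and the attaining triplet $(e_1,y,t_0)$ explicitly, whereas the paper merely asserts the existence of a suitable finite-rank projection $p$ and writes down $y$ in closed form at the end; your use of $\|PaP\|_\infty\le\|a\|_\infty$ in place of the paper's equality is equally sufficient for the estimate.
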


\begin{proof} As before, Proposition \ref{p lower bound for lambda} assures that \begin{equation}\label{eq bound below in thm 2} \lambda (a) \geq \frac{1 - \|a\|_1 + 2 \|a\|_{\infty}}{2},
 \end{equation} for every $a\in \mathcal{B}_{C_1(H)}$.\smallskip

Let us fix $a\in \mathcal{B}_{C_1(H)}$. As we commented above, there exist orthonormal systems $(\xi_n)$ and $(\eta_n)$ in $H$ such that
$\displaystyle a= \sum_{n=1}^{\infty} \mu_n (a) \eta_n \otimes \xi_n,$ where $(\mu_n (a) )\subset \mathbb{R}_0^{+}$ is the sequence of singular values of $a$, $\displaystyle \|a\|_1 =\sum_{n=1}^{\infty} \mu_n (a) \leq 1$, and $\|a\|_{\infty} = \mu_1(a)$. To simplify the notation we write $e_n = \eta_n \otimes \xi_n$.\smallskip

Fix an arbitrary triplet  $(e, y, t)\in \partial_e (\mathcal{B}_{C_1(H)})\times \mathcal{B}_{C_1(H)} \times ]0,1]$ which is amenable to $a$, that is, $a = t e + (1-t) y$. Given $\varepsilon >0$, let us find a finite rank projection $p\in B(H)$, depending on $\varepsilon$, $e,$ and $a$, such that $\|p a p \|_{\infty} =\|a\|_{\infty} =\mu_1(a)$, $\| a- p a p \|_1 \leq \varepsilon$, and $p e = e p = e.$ Let $K$ denote the finite-dimensional Hilbert space $p(H)$. Since the mapping $C_1(H)\to C_1(H)$, $x\mapsto pxp$ is a contractive linear projection on $C_1(H)$, we deduce that $\|p y p \|_1\leq \|y\|_1\leq 1$. We also know that $$ p a p = t p e p + (1-t) p y p= t e + (1-t) p y p,$$ where $p a p, e, p y p\in C_1(K)$ and $e\in \partial_e(\mathcal{B}_{C_1(K)})$. Therefore, $(e, p y p, t)$ is a triplet in $\partial_e (\mathcal{B}_{C_1(K)})\times \mathcal{B}_{C_1(K)} \times ]0,1]$ which is amenable to $p a p$. Theorem \ref{t lambda function in finite dimensional case} implies that $$ t \leq \lambda_{C_1(K)} (p a p) = \frac{1 - \| p a p\|_1 + 2 \|p a p\|_{\infty}}{2} < \frac{1 - \| a \|_1 +\varepsilon + 2 \| a \|_{\infty}}{2}. $$ We deduce from the arbitrariness of $\varepsilon>0$ that $t\leq \frac{1 - \| a \|_1  + 2 \| a \|_{\infty}}{2},$ for every $t$ in a triplet $(e, y, t)\in \partial_e (\mathcal{B}_{C_1(H)})\times \mathcal{B}_{C_1(H)} \times ]0,1]$ amenable to $a$, which proves that $\lambda(a) = \frac{1 - \| a \|_1  + 2 \| a \|_{\infty}}{2}$.\smallskip

To see the last statement, as in the proof of \cite[Theorem 1.11]{AronLohm}, there is no loss of generality in assuming that $a\notin \partial_e(\mathcal{B}_{C_1(H)})$, and under this assumption, we take $e_1$ and $$ y = \frac{\|a\|_1-1}{ (1+\|a\|_1-2 \|a\|_{\infty})}  e_1 +  \sum_{n\geq 2} \frac{2 \mu_n(a)}{1+\|a\|_1-2 \|a\|_{\infty}} e_n.$$ It is not hard to check that $\|y\|_1= 1$, and $a = \lambda(a) e_1 + (1-\lambda(a)) y$, witnessing that $\lambda(a)$ is attained.
\end{proof}

It seems appropriate to insert a couple of comments. First, the $\lambda$-function is not, in general, a continuous function, nor even in the case of finite-dimensional spaces (see, for example, \cite[Remark 2.4 and Theorem 2.10]{AronLohm}). Anyway, the $\|.\|_1$-continuity of the $\lambda$-function on $C_1(H)$ follows as a consequence of Theorem \ref{t lambda function in infinite dimensional case} above. We observe that in the proof in the above Theorem \ref{t lambda function in infinite dimensional case} the strategy consisted in approximating, in the norm $\|.\|_1$, an arbitrary element $a$ in $\mathcal{B}_{C_1(H)}$ by an element of the form $p a p,$ for a suitable finite rank projection $p\in B(H)$. A subtle approximation allows us to apply the result in the finite-dimensional case established in Theorem \ref{t lambda function in finite dimensional case}. However, in our arguments we have not make any use of the continuity of the $\lambda$-function.\smallskip

Like in the case of $\ell_1(X)$, where $X$ is a strictly convex normed space (cf. \cite[Theorem 1.11]{AronLohm}), when $H$ is infinite-dimensional, the space $C_1(H)$ does not satisfy the uniform $\lambda$-property. To see this, for each natural $n$, we just take an element $a_n$ in the unit sphere of $C_1(H)$ such that $\|a_n\|_{\infty} = \frac1n$ and $\lambda(a) = \frac1n$ (consider, for example, $\displaystyle a_n=\sum_{k=1}^n \frac1n \xi_n\otimes\xi_n$, where $\{\xi_1,\dots,\xi_n\}$ is an orthonormal system in $H$).

\section{A glimpse at operator spaces}

In the arguments developed in the previous section to determine the form of the $\lambda$-function on $\mathcal{B}_{C_1(H)}$ we settled upon a proof which passes through the finite-dimensional case and a subsequent approximation argument. We shall complete this note with an alternative argument relying on basic theory of operator spaces, which might have its own interest for other problems.\smallskip

Let $A$ be a C$^*$-algebra regarded as a closed hermitian subalgebra of some $B(H)$. Let $M_n (A)$ denote the space of all $n\times n$-matrices with entries in $A$. With the obvious matrix multiplication, and the $^*$-operation, $M_n(A)$ is $^*$-algebra (cf. \cite[\S IV.3]{Tak}). For each natural $n$, let $H^{(n)}= \ell_2^n(H)$ be the Hilbert space direct sum of $n$ copies of $H$. The $^*$-algebra $M_n(A)$ is a closed C$^*$-subalgebra of $B(H^{(n)})$ (this corresponds to the so-called \emph{canonical operator space structure} on $A$ in \cite{BleLeMerd2004}). A \emph{concrete operator space} is a closed subspace $X$ of some $B(H)$. The space $M_n(X)$ is regarded as a closed subspace of $M_n(B(H))\cong B(H^{(n)})$ with the norm $\|.\|_n$. We are mainly interested on $M_2 (X)$ with $X=C_1(H)$. For the basic background on operator spaces we refer to \cite{BleLeMerd2004, EffRuan2000} and \cite{Pi2003}.\smallskip

A useful device due to Wielandt asserts that if $a$ is any matrix in $M_n(\mathbb{C})$ then the matrix $\widetilde{a} = \left(
                                                                                          \begin{array}{cc}
                                                                                            0 & a \\
                                                                                            a^* & 0 \\
                                                                                          \end{array}
                                                                                        \right)\in M_2(M_n(\mathbb{C}))$
is hermitian and the eigenvalues of $\widetilde{a}$ are the singular values of $a$ together with their negatives (see, for example, \cite[Proof of Corollary 3.7]{Bhat2007}).\smallskip

Let $a$ be an element in $K(H)$. We consider the space $M_2 (K(H))$ with the C$^*$-structure derived from $M_2 (B(H)) \cong  B(H^{(2)})$. It is easy to see that the element $\widetilde{a} = \left(
                                                                                          \begin{array}{cc}
                                                                                            0 & a \\
                                                                                            a^* & 0 \\
                                                                                          \end{array}
                                                                                        \right)\in M_2(B(H)) \cong   B(H^{(2)})$
                                                                                        is
a hermitian compact operator, and hence lies in $K(H^{(2)})$. Suppose now that $a\in C_p(H)$ with $1\leq p<\infty$. Then we can write $\displaystyle a= \sum_{n=1}^{\infty} \mu_n (a) \eta_n \otimes \xi_n,$ where $(\mu_n (a) )\subset \mathbb{R}_0^{+}$ is the sequence of singular values of $a$, $\{\xi_n\}_n$ and $\{\eta_n\}_n$ are orthonormal systems in $H$, and $\displaystyle \|a\|_p^p =\sum_{n=1}^{\infty} \mu_n(a)^p$ (with $\|a\|_{\infty} = \mu_1(a)$), and consequently, $\displaystyle a^*= \sum_{n=1}^{\infty} \mu_n (a) \xi_n \otimes \eta_n.$ Let us denote the elements in $H^{(2)}$ by pairs of the form $(\xi, \eta)$ with $\xi, \eta\in H$. It can be easily seen that \begin{equation}\label{eq spectral resol tilde a} \widetilde{a} = \sum_{n=1}^{\infty} \mu_n (a) ((\eta_n,0) \otimes (0,\xi_n)+ (\eta_n,0) \otimes (0,\xi_n)),
\end{equation} and hence $\widetilde{a}\in C_p(H^{(2)})$.\smallskip

We recall some notation from \cite[\S 41]{Bhat2007}. Let $a$ be a compact self-adjoint operator in $B(H)$. An \emph{enumeration of the eigenvalues of $a$} is a family of the form $\{ \alpha_n(a) : n\in \pm \mathbb{N}\}$ satisfying the following properties:\begin{enumerate}[$(i)$]\item $\alpha_1 (a) \geq \alpha_2 (a)\geq \ldots\geq 0$, $\alpha_{-1} (a) \leq \alpha_{-2} (a)\leq \ldots\leq 0$;
\item All non-zero eigenvalues of $a$ are included in these sequences with their proper multiplicities;
\item If $a$ has infinitely many positive eigenvalues, then these make up
the entire sequence $\{\alpha_n : n\in \mathbb{N}\}$; but if $a$ has finitely many
positive eigenvalues, then in addition to them this sequence contains infinitely many zeros;
\item Similarly the sequence $\{\alpha_{-n} : n\in \mathbb{N}\}$ contains only the negative
eigenvalues of $a$ if there are infinitely many of them, and if there
are only a finite number of these, then it contains in addition to them
infinitely many zeros.
\end{enumerate}

Having in mind the expression in \eqref{eq spectral resol tilde a} we can easily deduce that $\widetilde{a}$ admits an enumeration of its eigenvalues of the form $\{ \alpha_n(a) : n\in \pm \mathbb{N}\},$ where $\{\alpha_n (a): n\in \mathbb{N}\}$ coincides with the sequence $\{\mu_n(a): n\in \mathbb{N}\}$ of singular values of $a$, and $\alpha_{-n} (a) = -\mu_n(a)$ for all $n\in \mathbb{N}$.\smallskip

We are interested in an infinite-dimensional version of Mirsky's theorem presented in Theorem \ref{t Mirsky}. An inspection to the proof in \cite[Theorem 9.8]{Bhat2007} points out that this theorem was obtained, via Wielandt's device in $M_n(\mathbb{C})$, from a previous version for hermitian operators also due to Mirsky. We already have a Wielandt's device for matrices in $C_p(H)$, the remaining ingredient for our purposes is the following result due to A.S. Markus.

\begin{theorem}\label{t Markus}{\rm(\cite{Markus64} or \cite[$(41.1)$]{Bhat2007})} Let $a,b$ be compact self-adjoint operators in $B(H)$, and suppose that $a-b\in C_p(H)$ for some $1\leq p\leq \infty$. Let $\{ \alpha_n : n\in \pm \mathbb{N}\}$ and $\{ \beta_n : n\in \pm \mathbb{N}\}$ be enumerations of the eigenvalues of $a$ and $b$, respectively. Then $$\sum_{n=1}^\infty |\alpha_n -\beta_n|^p + \sum_{n=1}^\infty |\alpha_{-n} -\beta_{-n}|^p \leq \|a-b\|_p^p.$$ $\hfill\Box$
\end{theorem}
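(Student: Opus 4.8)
The plan is to reduce the statement to finite-dimensional eigenvalue perturbation theory and then to pass to compact operators through a compression argument. I would organize the proof around three ingredients: the finite-dimensional Lidskii--Wielandt majorization inequality, an elementary reconciliation of the two-sided enumeration with the single decreasing rearrangement of the eigenvalues, and a variational (min--max) limiting argument.

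First I would record the engine in the matrix case. For Hermitian matrices $A,B \in M_N(\mathbb{C})$, with eigenvalues listed in decreasing order $\lambda_1(A)\geq \cdots \geq \lambda_N(A)$ and likewise for $B$, the Lidskii--Wielandt theorem asserts that the vector $(\lambda_k(A)-\lambda_k(B))_{k=1}^N$ is majorized by the vector of eigenvalues of $A-B$. Since every $\ell_p$-norm is a symmetric gauge function, this yields
\[
\sum_{k=1}^N |\lambda_k(A)-\lambda_k(B)|^p \leq \|A-B\|_p^p, \qquad 1\leq p<\infty,
\]
with the obvious modification ($\sup$ in place of $\sum$) for $p=\infty$. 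This matrix inequality is itself classical and follows from Ky Fan's maximum principle, so I would invoke it rather than reprove it.

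Second I would reconcile this single decreasing rearrangement with the two-sided enumeration appearing in the statement. Writing $r$ and $s$ for the numbers of nonnegative eigenvalues of $A$ and $B$, a direct bookkeeping shows that the two pairings assign the same pair at every index except on the band where one operator's eigenvalue is negative while the other's is nonnegative; on such an index one has $\lambda_k(A)\leq 0\leq \lambda_k(B)$ (or the reverse), whence $|\lambda_k(A)-\lambda_k(B)|^p=(|\lambda_k(A)|+|\lambda_k(B)|)^p\geq |\lambda_k(A)|^p+|\lambda_k(B)|^p$. Summing, the two-sided enumeration sum is dominated by the single-rearrangement sum (they coincide when $p=1$), so the finite-dimensional two-sided inequality
\[
\sum_{n\geq 1}|\alpha_n(A)-\alpha_n(B)|^p + \sum_{n\geq 1}|\alpha_{-n}(A)-\alpha_{-n}(B)|^p \leq \|A-B\|_p^p
\]
follows from the first step.

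Third comes the limiting argument. Set $c=a-b\in C_p(H)$. For an increasing net of finite-rank projections $P\uparrow I$, the compressions $PaP$ and $PbP$ are finite-rank self-adjoint operators with difference $PcP$, and since $x\mapsto PxP$ is a contraction on $C_p(H)$ we have $\|PcP\|_p\leq \|c\|_p$. Applying the finite-dimensional two-sided inequality to $PaP,PbP$ gives, for every truncation level $N$,
\[
\sum_{n=1}^N |\alpha_n(PaP)-\alpha_n(PbP)|^p + \sum_{n=1}^N |\alpha_{-n}(PaP)-\alpha_{-n}(PbP)|^p \leq \|c\|_p^p.
\]
One then uses the variational convergence $\alpha_{\pm n}(PaP)\to \alpha_{\pm n}(a)$ and $\alpha_{\pm n}(PbP)\to \alpha_{\pm n}(b)$ as $P\uparrow I$, valid for each fixed $n$: by the min--max principle compression interlaces the eigenvalues (lowering the top ones, raising the bottom ones), while near-optimal eigenvectors of $a$ are approximable inside $P(H)$, forcing convergence. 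Letting $P\uparrow I$ and then $N\to\infty$ (monotone convergence) yields the assertion.

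The main obstacle I anticipate is precisely this last step. The difficulty is that $a$ and $b$ need not individually belong to $C_p(H)$ --- only their difference does --- so one cannot split the problem into positive and negative parts (the map $a\mapsto (a_+,a_-)$ is not suitably contractive for the $C_p$-norm) and must instead control the full eigenvalue sequences through compressions of the self-adjoint operators while preserving the $C_p$-bound on the difference. Making the interchange of the two limits ($P\uparrow I$ and $N\to\infty$) rigorous, together with the convergence of each $\alpha_{\pm n}(PaP)$, is the technical heart of the argument.
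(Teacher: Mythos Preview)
The paper does not prove this statement: it is quoted from Markus \cite{Markus64} and Bhatia \cite{Bhat2007}, and the $\Box$ at the end of the display signals that no argument is supplied. The theorem is used as a black box---applied to the Wielandt doublings $\widetilde a,\widetilde b\in C_p(H^{(2)})$---to derive the non-self-adjoint inequality of Theorem~\ref{t Markus non selfadjoint}. So there is no in-paper proof to compare your sketch against; you are supplying a proof where the paper supplies only a reference.

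That said, your outline is a sound route to a self-contained argument. Steps 1 and 2 are correct: Lidskii--Wielandt gives the finite-dimensional inequality for the single decreasing rearrangement, and the bookkeeping you describe (on the indices where $\lambda_k(A)$ and $\lambda_k(B)$ have opposite signs one has $(|\lambda_k(A)|+|\lambda_k(B)|)^p\geq |\lambda_k(A)|^p+|\lambda_k(B)|^p$, while all other indices match exactly) does show that the two-sided enumeration sum is dominated by the single-rearrangement sum. Step 3 is where the real work lies, and you have located the difficulty precisely: since only $a-b$ is assumed to be in $C_p(H)$, the split into positive and negative parts is unavailable and one must argue via compressions of the whole operators. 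The pointwise convergence $\alpha_{\pm n}(PaP)\to\alpha_{\pm n}(a)$ does hold---Cauchy interlacing gives the monotone bound $\alpha_n(PaP)\leq\alpha_n(a)$ for every finite-rank $P$ (compression cannot increase the number of positive eigenvalues), and approximating the span of the top $n$ eigenvectors of $a$ inside $P(H)$ gives the reverse inequality in the limit---so the scheme can be completed. The limit interchange (first $P\uparrow I$ at fixed $N$, then $N\to\infty$) is legitimate for exactly the monotonicity reason just mentioned, but as you anticipate, writing it out carefully is the technical heart of the proof.
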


Suppose now that $a,b\in C_p(H)$. If we apply the previous theorem of Markus to the elements $\widetilde{a} = \left(
                                                                                          \begin{array}{cc}
                                                                                            0 & a \\
                                                                                            a^* & 0 \\
                                                                                          \end{array}
                                                                                        \right)$ and $\widetilde{b} = \left(
                                                                                          \begin{array}{cc}
                                                                                            0 & b \\
                                                                                            b^* & 0 \\
                                                                                          \end{array}
                                                                                        \right)$ in $C_p(H^{(2)}),$
the next result follows from the prior discussion on the Wielandt's device for $C_p(H)$.

\begin{theorem}\label{t Markus non selfadjoint} Let $a,b$ be elements in $C_p(H)$ for some $1\leq p\leq \infty$. Let $\{ \mu_n (a) : n\in \pm \mathbb{N}\}$ and $\{ \mu_n (b) : n\in \pm \mathbb{N}\}$ be the singular values of $a$ and $b$, respectively. Then $$\sum_{n=1}^\infty |\mu_n (a) -\mu_n (b)|^p  \leq \|a-b\|_p^p.$$ \end{theorem}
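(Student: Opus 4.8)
The plan is to mimic, in the infinite-dimensional setting, the way Mirsky's matrix inequality (Theorem \ref{t Mirsky}) was obtained from its self-adjoint predecessor: namely, to apply Markus' theorem (Theorem \ref{t Markus}) to the Wielandt dilations $\widetilde{a},\widetilde{b}\in C_p(H^{(2)})$. Since $a,b\in C_p(H)\subseteq K(H)$, the operators $\widetilde{a}$ and $\widetilde{b}$ are compact and self-adjoint, by the discussion culminating in \eqref{eq spectral resol tilde a} they belong to $C_p(H^{(2)})$, and clearly $\widetilde{a}-\widetilde{b}=\widetilde{a-b}$, so all the hypotheses of Theorem \ref{t Markus} are met.

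First I would fix the enumerations of eigenvalues. As recalled just before the statement, $\widetilde{a}$ admits the enumeration $\{\alpha_n(a):n\in\pm\mathbb{N}\}$ with $\alpha_n(a)=\mu_n(a)$ and $\alpha_{-n}(a)=-\mu_n(a)$ for $n\in\mathbb{N}$, and similarly for $\widetilde{b}$. Substituting these into the inequality of Theorem \ref{t Markus} (with $\widetilde{a},\widetilde{b}$ in the roles of $a,b$) gives
$$\sum_{n=1}^{\infty}|\mu_n(a)-\mu_n(b)|^p+\sum_{n=1}^{\infty}\bigl|-\mu_n(a)+\mu_n(b)\bigr|^p\leq \|\widetilde{a}-\widetilde{b}\|_p^p,$$
whose left-hand side is exactly $2\sum_{n=1}^{\infty}|\mu_n(a)-\mu_n(b)|^p$.

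Next I would evaluate the right-hand side by applying the Wielandt device once more, now to $c:=a-b\in C_p(H)$: the eigenvalues of $\widetilde{c}$ are the numbers $\pm\mu_n(c)$, hence its singular values are the $\mu_n(c)$ each repeated twice, and therefore $\|\widetilde{a}-\widetilde{b}\|_p^p=\|\widetilde{c}\|_p^p=2\sum_{n=1}^{\infty}\mu_n(c)^p=2\|a-b\|_p^p$. Cancelling the common factor $2$ yields the asserted inequality. For $p=\infty$ the sums are to be read as suprema and the same computation applies, the factor $2$ being now absent from both sides and the conclusion reading $\sup_n|\mu_n(a)-\mu_n(b)|\leq\|a-b\|_\infty$; thus the argument is uniform in $p\in[1,\infty]$.

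No genuine obstacle stands in the way once the Wielandt device for $C_p(H)$ is available. The two points that demand a little care are: verifying that the factor $2$ produced by the dilation cancels identically on both sides (and disappears consistently in the limit $p=\infty$); and checking that the families $\{\mu_n(a)\}_n$ and $\{\mu_n(b)\}_n$ really do satisfy the defining conditions $(i)$--$(iv)$ of an enumeration of the eigenvalues of $\widetilde{a}$ and $\widetilde{b}$ --- in particular that finitely supported singular-value sequences, arising when $a$ or $b$ has finite rank, are correctly padded with zeros.
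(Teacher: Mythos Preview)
Your proposal is correct and follows exactly the route the paper takes: apply Markus' self-adjoint inequality (Theorem \ref{t Markus}) to the Wielandt dilations $\widetilde{a},\widetilde{b}\in C_p(H^{(2)})$, use the enumeration $\alpha_{\pm n}=\pm\mu_n$ identified just before the statement, and read off the inequality. The only addition is that you spell out the factor-of-$2$ cancellation and the $p=\infty$ reading, which the paper leaves implicit.
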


We observe that Theorem \ref{t Markus non selfadjoint} is an infinite-dimensional version of Mirsky's Theorem \ref{t Mirsky}.\smallskip

We shall finish this note with a couple of applications of the previous Theorem \ref{t Markus non selfadjoint}. We begin with an infinite-dimensional generalization of the claim in \eqref{eq minimum value of fa p=1} for $p=1$ and the result established in \cite[$(11)$ in the proof of Proposition 2.14]{FerJorPer2018} for $1<p<\infty$.

\begin{corollary}\label{c min distance to positive multiples of a min partial isometry} Let $a$ be an element in $C_p(H)$ {\rm(}with $1\leq p<\infty${\rm)}, $t\in \mathbb{R}^+$, and let $\mathcal{U}_{min} (H)\subseteq S(C_p(H))$ denote the set of all minimal partial isometries in $B(H)$, where $H$ is an arbitrary complex Hilbert space. Then the minimum value of the mapping $f_{a,t} : \mathcal{U}_{min} (H) \to \mathbb{R}_0^+$, $ f_{a,t} (e) = \|a- t e\|_p^p$ is  $$ |t-\mu_1(a)|^p + \sum_{j=2}^n \mu_j (a)^p = |t-\mu_1(a)|^p + (\|a\|_p^p-\mu_1(a))^p,$$ where $\{\mu_1(a)\geq \ldots \geq \mu_n(a)\geq \ldots\}$ is the sequence of all singular values of $a$. We further know that this minimum value is attained.
\end{corollary}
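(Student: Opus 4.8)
The plan is to obtain the lower bound directly from the infinite-dimensional Mirsky inequality of Theorem \ref{t Markus non selfadjoint}, and then to exhibit an explicit minimizer coming from a Schmidt decomposition of $a$, which will simultaneously supply the matching upper bound and the attainment claim. Throughout I would regard the elements of $C_p(H)$ as operators on $H$ and fix a decomposition $a=\sum_{n\ge 1}\mu_n(a)\,\eta_n\otimes\xi_n$ in which $(\mu_n(a))$ is the decreasingly ordered sequence of singular values of $a$ and $(\xi_n)$, $(\eta_n)$ are orthonormal systems in $H$; I would then set $e_n:=\eta_n\otimes\xi_n$, so that each $e_n$ lies in $\mathcal{U}_{min}(H)$ and $e_n$ is orthogonal to $e_m$ whenever $n\ne m$.

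For the lower bound I would fix $e\in\mathcal{U}_{min}(H)$ and observe that, since $e$ is a rank-one partial isometry, the operator $te$ has decreasingly ordered singular value sequence $(t,0,0,\dots)$. Applying Theorem \ref{t Markus non selfadjoint} to the pair $a$, $te$ then gives
\[
 |t-\mu_1(a)|^p+\sum_{n\ge 2}\mu_n(a)^p=\sum_{n\ge 1}\bigl|\mu_n(a)-\mu_n(te)\bigr|^p\le\|a-te\|_p^p=f_{a,t}(e),
\]
for every $e\in\mathcal{U}_{min}(H)$; here the relative order of $t$ and the $\mu_n(a)$ plays no role, because every singular value of $te$ beyond the first one vanishes. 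This is the step where the detour through the finite-dimensional Theorem \ref{t Mirsky} used in \eqref{eq minimum value of fa p=1} gets replaced by its infinite-dimensional version.

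For the reverse inequality and the attainment I would test $f_{a,t}$ at $e_1=\eta_1\otimes\xi_1\in\mathcal{U}_{min}(H)$. Since $a-te_1=(\mu_1(a)-t)e_1+\sum_{n\ge 2}\mu_n(a)e_n$ is a sum of pairwise orthogonal elements of $C_p(H)$ (and $\sum_n\mu_n(a)^p<\infty$ ensures norm convergence of the series), the additivity of $\|\cdot\|_p^p$ over mutually orthogonal operators recalled in Section 2---extended from finite truncations to the series by a routine limiting argument---would yield $f_{a,t}(e_1)=|\mu_1(a)-t|^p+\sum_{n\ge 2}\mu_n(a)^p$, which matches the lower bound. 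Hence the minimum of $f_{a,t}$ over $\mathcal{U}_{min}(H)$ equals $|t-\mu_1(a)|^p+\sum_{n\ge 2}\mu_n(a)^p=|t-\mu_1(a)|^p+(\|a\|_p^p-\mu_1(a)^p)$ and is attained at $e_1$.

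The whole argument is carried by Theorem \ref{t Markus non selfadjoint}, so I do not anticipate a genuine obstacle; the only two points requiring a little care are the identification of the singular-value sequence of $te$ (so that Theorem \ref{t Markus non selfadjoint} delivers exactly the claimed expression) and the justification of the $\|\cdot\|_p^p$-additivity for the possibly infinite orthogonal decomposition of $a-te_1$. In contrast with the range $1<p<\infty$ treated in \cite{FerJorPer2018}, no differentiability of the norm is invoked, so I would only record one minimizer rather than the whole set of them, which is all that is needed here.
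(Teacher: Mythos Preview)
Your proposal is correct and follows essentially the same route as the paper: the lower bound comes from applying Theorem \ref{t Markus non selfadjoint} to $a$ and $te$ using that the singular values of $te$ are $(t,0,0,\dots)$, and the upper bound and attainment are obtained by evaluating $f_{a,t}$ at $e_1=\eta_1\otimes\xi_1$ via the Schmidt decomposition of $a$. Your write-up simply makes explicit the orthogonality computation for $f_{a,t}(e_1)$ that the paper leaves implicit.
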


\begin{proof} Contrary to the case in which $H$ is finite-dimensional, the set $\mathcal{U}_{min} (H)$ is not norm compact. Let $e$ be an element in $\mathcal{U}_{min} (H)$. Since the singular values of $t e$ are $t$ and zero, a straight application of Theorem \ref{t Markus non selfadjoint} implies that $$ |t-\mu_1(a)|^p + (\|a\|_p^p-\mu_1(a))^p= |t-\mu_1(a)|^p + \sum_{j=2}^n \mu_j (a)^p $$ $$= \sum_{n=1}^\infty |\mu_n (a) -\mu_n (t e)|^p  \leq \|a-te\|_p^p = f_{a,t} (e).$$ The proof culminates by observing that if $\displaystyle a= \sum_{n=1}^{\infty} \mu_n (a) \eta_n \otimes \xi_n,$ where $(\xi_n)$ and $(\eta_n)$ are orthonormal systems in $H$, $(\mu_n (a) )\subset \mathbb{R}_0^{+}$ is the sequence of singular values of $a,$ and $\displaystyle \|a\|_p^p =\sum_{n=1}^{\infty} \mu_n (a)^p,$ then $f_{a,t} (\eta_1 \otimes \xi_1) = |t-\mu_1(a)|^p + (\|a\|_p^p-\mu_1(a)^p).$
\end{proof}

Finally, if in the proof of Theorem \ref{t lambda function in finite dimensional case} we consider an arbitrary complex Hilbert space $H,$ and we replace Claim \eqref{eq minimum value of fa p=1} with the conclusion in Corollary \ref{c min distance to positive multiples of a min partial isometry} for $p=1$ the whole argument in the proof of the just quoted theorem remains valid to obtain a direct and new proof of Theorem \ref{t lambda function in infinite dimensional case}.\smallskip\smallskip\smallskip

\textbf{Acknowledgements} Author partially supported by the Spanish Ministry of Economy and Competitiveness and European Regional Development Fund project no. MTM2014-58984-P and Junta de Andaluc\'{\i}a grant FQM375.\smallskip

\bigskip

\end{document}